\theoremstyle{definition}
\newtheorem*{definition}{Definition}
\title{Lovasz' Conjecture and Other Applications of Topological Methods in Discrete Mathematics}
\author{Jingsi Hou, Guangyan Huang, Sammy Suliman, Haoran Yan}
\date{September 2022}
\begin{document}

\maketitle

\section{Background}
In 20th century mathematics, the field of topology, which concerns the properties of geometric objects under continuous transformation, has proved surprisingly useful in application to the study of discrete mathematics, such as combinatorics, graph theory, and theoretical computer science. In this paper, we seek to provide an introduction to the relevant topological concepts to non-specialists, as well as a selection of some existing applications to theorems in discrete mathematics.

\section{Introductory Topology}
Topology is the study of continuously deformable objects. To rigorously define this, we begin by introducing the concept of a \textbf{homeomorphism}. Two objects are homeomorphic if there exist a continuous bijection between them with a continuous inverse. For example, a square and a circle are homeomorphic. While an explicit map may not immediately clear (see [1]), its existence is made intuitively evident by inflating the edges of the square into a circle. On the other hand, a homeomorphism does not exist between the torus, the geometric shape formed by revolving a circle in $\mathds{R}^3$ about an axis (visually, a hollow doughnut), and a sphere in $\mathds{R}^3$. If X and Y are two homeomorphic topological spaces, we denote this as $X \cong Y$.

Next, we introduce \textbf{affine independence}, a generalisation of the more familiar concept of linear independence to point sets. A set of $k+1$ points $v_0, ..., v_k$ is said to be affinely independent if there exist real numbers not $a_0, ..., a_k$ not all zero such that $\sum_{i=0}^d a_iv_i = 0$ and $\sum_{i=0}^k a_i = 0$. We can express this in a form more familiar to students of linear algebra, in that the $k$-vectors $v_k - v_0, ..., v_1 - v_0$ are linearly independent. Therefore, a set of 2 distinct points is automatically affinely independent, a set of 3 distinct points are affinely independent if they do not lie on a common line, etc. 

Now we will introduce the \textbf{simplex}, which generalises the notion of a triangle into arbitrary dimensions. Formally, a simplex is defined as the convex hull of a finite affinely independent point set. Thus, a simplex in $\mathds{R}^1$ is a line segment as this forms the most "efficient" possible connection between any 2 points. Similarly, a simplex in $\mathds{R}^2$ is a triangle as no other shape can be drawn that encompasses all connections between 3 affinely independent points with fewer line segments. Extensions into higher-dimensional spaces can be made similarly. A face of a simplex is the convex hull of an arbitrary subset of vertices of the simplex, which forms a lower-dimensional simplex of its own right by definition. 

A \textbf{simplicial complex} is defined as a set of simplicies satisfying the properties that each face of a simplex is included within the set and that every intersection between 2 simplices must be a face of both. Below are some pictured examples of some families of simplices that satisfy and violate the conditions:
\includegraphics[width=12cm, height=3cm]{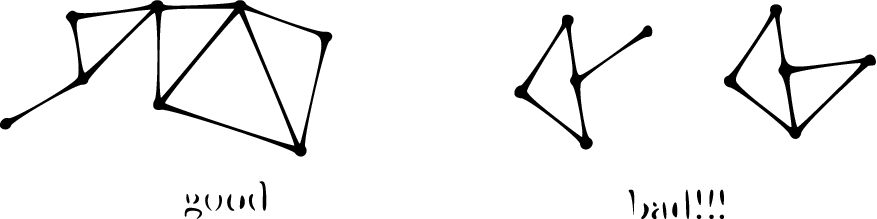}
    \begin{center}
        Credit: Matoušek
    \end{center}

The common understanding of a polyhedra can thus be represented a union of a simplicial complex and is denoted $||\triangle||$. 

A \textbf{triangulation} of any topological space $X$ is a simplicial complex $\triangle$ such that $X \cong ||\triangle||$. This results in a simplicial complex that can be "smoothed" back into the original shape through continuous deformation.

\includegraphics[width=8cm, height=4cm]{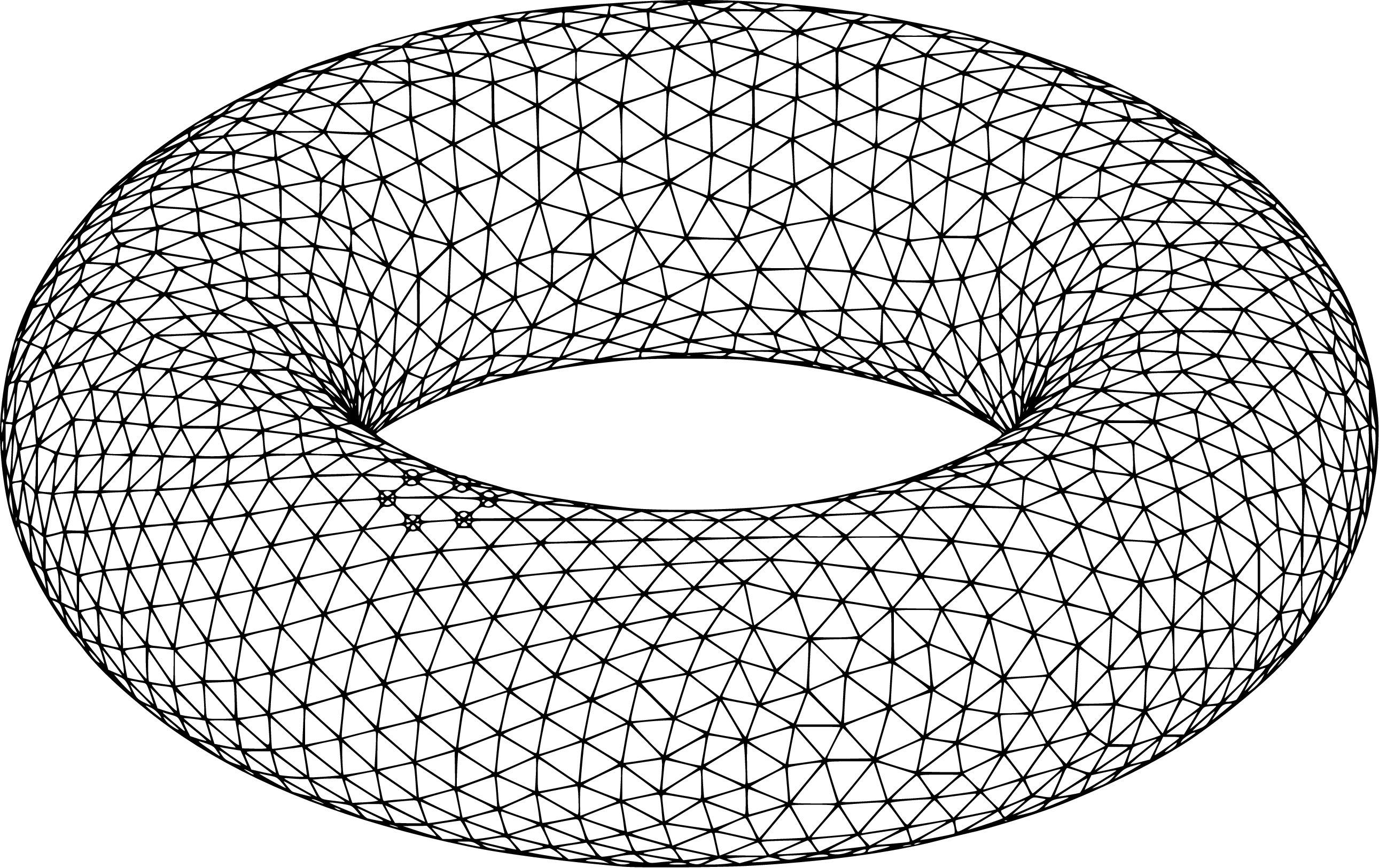}
    \begin{center}
    The triangulation of a torus. Credit: https://math.stackexchange.com/
    \end{center}

Having introduced the necessary topological background, we now proceed to stating the Borsuk-Ulam Theorem, which will be of keen interest as we seek to prove theorems in discrete mathematics.

\section{The Borsuk-Ulam Theorem and Equivalent Results}
Let $S^n$ denote the unit sphere in $\mathds{R}^{n-1}$. That is, $S^1$ is a circle, $S^2$ is a traditional sphere, and so on. The Borsuk-Ulam Theorem states the following:
\newtheorem*{borsuk}{Borsuk-Ulam Theorem}

\begin{borsuk}
    For every continuous mapping $f: S^n \rightarrow \mathds{R}^n$ there exists a point $x \in S^n$ with $f(x) = f(-x)$.
\end{borsuk}
The points $x, -x$ are referred to as antipodal points. The canonical example of this phenomena is that there exist 2 antipodal points on the globe where temperature and barometric position are equal (assuming temperature and barometric pressure to be continuous maps).

In order to aid the transition to a combinatorial / graph-theoretic viewpoint, we now present 2 equivalent formulations of the theorem in the language of discrete mathematics.

\newtheorem*{tucker}{Tucker's Lemma}
\newtheorem*{lst}{Lusternik–Schnirelmann Theorem}

\begin{tucker}
    Let $T$ be a triangulation of $B^n$ that is antipodally symmetric on the boundary. Let $\lambda: V(T) \rightarrow \{+1, -1, +2, -2,..., +n, -n\}$ be a labeling of the vertices of $T$ that satisfies $\lambda(-v) = -\lambda(v)$ for every vertex $v \in \partial B^n$ (that is, $\lambda$ is antipodal on the boundary). Then there exists a 1-simplex (an edge) in $T$ that is complementary; i.e., its two vertices are labeled by opposite numbers.
\end{tucker}

\begin{lst}
    Whenever $S^n$ is covered by $n+1$ sets $A_1, A_2, ... ,A_{n+1}$, each $A_i$ open or closed, there is an $i$ such that $x, -x \in A_i$.
\end{lst}



\section{The Ham Sandwich Theorem}
\newtheorem*{hstm}{Ham Sandwich Theorem for Measures}

\begin{hstm}
    Let $\mu_1, \mu_2, ..., \mu_d$ be finite Borel measures
    on $\mathds{R}^d$ such that every hyperplane has measure $0$ for each of the $\mu_i$. Then there exists a hyperplane $h$ such that $\mu_i(h^+) = \frac{1}{2} \mu_i(\mathds{R}^d)$ for $i = 1, 2, ..., d$, where $h^+$ denotes one of the half-spaces defined by $h$.
\end{hstm}
The theorem derives its informal name from the case $n = 3$, 
where the Borel measures in questions take the form of 2 pieces of bread and a slice of ham. In this case, the ham sandwich theorem guarantees that there exists a planar cut with a knife that divides both pieces of bread and ham precisely in two.

\begin{proof}[\rm\bf{Proof}] First, let us consider $\textbf{u} = (u_0,u_1,...,u_d)$ a point on sphere $S^d$. If not all of the components in $u$ are zeros, we define $$h^+(u) := \{(x_1,...,x_d) \in \mathbb{R}^d : u_1x_1 +...+u_dx_d \leq u_0\} $$
Consider the half-space assigned to the antipodal point $\textbf{-u} = (-u_0,-u_1,...,-u_d)$, as defined $$h^+(-u) := \{(x_1,...,x_d) \in \mathbb{R}^d : u_1x_1 +...+ u_dx_d \geq u_0\} $$
We now choose u = (1,0,...,0) and -u = (-1,0,...,0), so we have $$h^+(u) := \{(x_1,...,x_d) \in \mathbb{R}^d : 0 \leq 1\} = \mathbb{R}^d  (*)$$
$$h^+(-u) := \{(x_1,...,x_d) \in R^d : 0 \geq 1\} = \emptyset  (**)$$
We now define a new family of functions $ f_i: S^d \rightarrow \mathbb{R}^d$ by $$ f_i(u) := \mu_i(h^+(u))$$
If we want to apply to the Borsuk-Ulam Theorem, we need to check the antipodal mapping first. Let $f(u_0) = f(-u_0)$ for some $u_0 \in S^d$, then by (*) and (**), we know that $h^+(u_0)$ is a half-space. As defined, every hyperplane has measure 0, so there exist $u \in S^d$ such that $f(u) = 0$, thereby satisfying the antipodal mapping.
Next, we will check the continuity of $f$. Let $(u_n)^\infty_{n=1}$ be a sequence of points of $S^d$ converging to u. Our goal is to prove $\mu_i(h^+(u_n))\rightarrow \mu_i(h^+(u))$. By assumption, if $x \not\in \partial h^+(u)$, then $x \in h^+(u_n)$ if and only if $x \in h^+(u)$ for all n sufficiently large. Now define the characteristic function of $h^+(u)$ $$g(x) = 1, x \in h^+(u)$$ $$g(x) = 0, x \not\in h^+(u)$$ \\ Similarly, define the characteristic function of $h^+(u_n)$ $$g_n(x) = 1, x \in h^+(u_n)$$ $$g_n(x) = 0, x \not\in h^+(u_n)$$ \\
So, $g_n(x) \rightarrow g(x)$ for all $x \not\in \partial h^+(u)$. By assumption, $\mu_i(\partial (h^+(u)) = 0$, so (add reference), $\mu_i(\partial (h^+(u_n)) = \int g_n d\mu_i \rightarrow \int g d\mu_i = \mu_i(\partial (h^+(u)) $
\end{proof}

\newtheorem*{hstp}{Ham Sandwich Theorem for Point Sets}

\begin{hstp}
    Let $A_1, A_2,..., A_d \subset \mathbb{R}^d$ be finite point sets. Then there exists a hyperplane $h$ that simultaneously bisects $A_1, A_2,...,A_d$.
\end{hstp}

Note: If a hyperplane $h$ bisects $A_i$ with $2k+1$ points, then each open half-space defined by $h$ contains at most $k$ points, which means at least one point lies on the hyperplane $h$. 

\begin{proof}[\rm\bf{Proof}]
The general idea of this proof is to replace the points sets with small balls then apply the Ham Sandwich Theorem for measures. In order to do this, we will introduce the following definition: 

\begin{definition}
      Finite point sets $A_1, A_2,..., A_d$ are in \textbf{general position} in $\mathds{R}^d$ if each distinct $A_i$ disjoint and no more than $d$ points of any set lie on a common hyperplane. 
\end{definition}

\includegraphics[width=6cm, height=6cm]{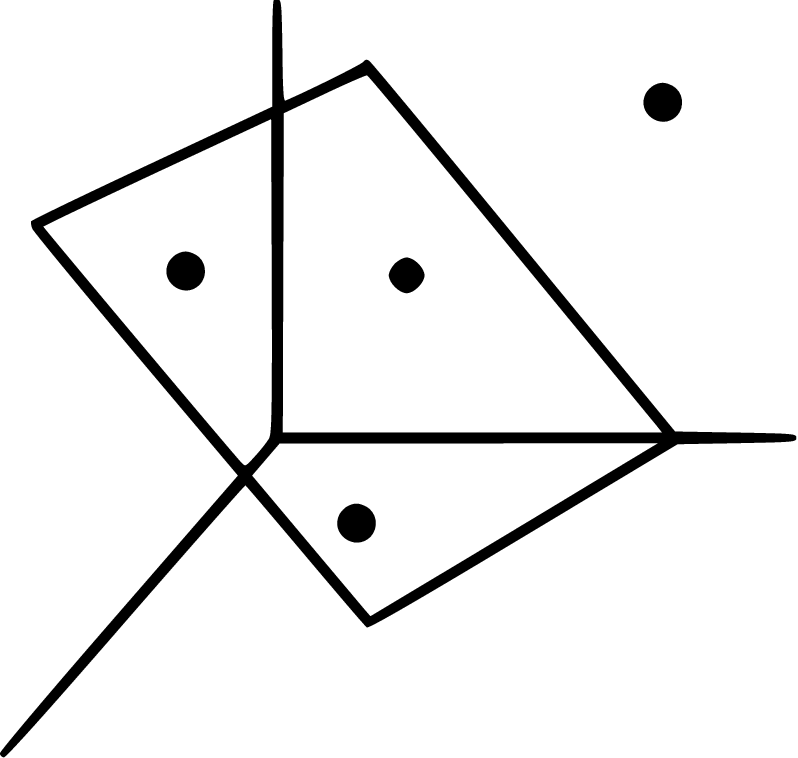}
    \begin{center}
    A point set in general position in $\mathds{R}^3$.
    \end{center}

First, we suppose each $A_i$ has odd number of points and $A_1\dot\cup A_2\dot\cup ... \dot\cup A_d$ is in general position. Now let $A_i^\epsilon$ be balls sets by replacing each point in $A_i$ with a solid ball of radius $\epsilon$ that is small enough such that no hyperplane can intersect more than d balls of $\cup A_i^\epsilon$. By the Ham Sandwich Theorem for measures, we can find a hyperplane $h$ bisecting the sets $A_i^\epsilon$ at the same time. Since each $A_i$ has odd cardinality, then at least one point must lie on the hyperplane $h$. Since no more than $d$ balls can be intersected simultaneously, then $h$ intersects only one ball of each $A_i^\epsilon$. It means $h$ passes through the center of this ball, thereby $h$ bisects each $A_i$.\par
Second, we still suppose that each $A_i$ has odd number of points but the position of $A_1\dot\cup A_2\dot\cup ... \dot\cup A_d$ can be random. We use a perturbation argument to construct them in general position. For every $\eta > 0$, let ${A_i}_\eta$ be point sets by moving each point in $A_i$ by at most $\eta$ so that ${A_1}_\eta \dot\cup {A_2}_\eta \dot\cup ... \dot\cup {A_d}_\eta$ is in general position. So we can find a hyperplane $h_\eta$ bisecting ${A_i}_\eta$. So define $$ h_\eta = \{x \in \mathbb{R}^d : \langle a_\eta,x \rangle = b_\eta\} $$ where $a_\eta$ is a unit vector, so $b_\eta$ lies in a bounded interval for $\eta$ small enough. By the Bolzano-Weierstrass Theorem, as $\eta \rightarrow 0$, for every pair $(a_\eta,b_\eta)$ there exists a limit point $(a,b) \in \mathbb{R}^{d+1}$ Now, let $h$ be the hyperplane obtained from $\langle a, x\rangle = b$. Next, let $\eta_1 > \eta_2 > ... $ be a sequence converging to 0 satisfying $(a_{\eta_j},b_{\eta_j}) \rightarrow (a,b)$. Consider a point $x$ is $\delta > 0$ away from hyperplane $h$, then for $j$ sufficiently large, it is at least $\frac{1}{2}\delta$ away from $h_{\eta_j}$. Now, consider that we have $k$ points of $A_i$ in one open half-space generated by $h$, then for all $j$ sufficiently large, the corresponding open half-space generated by $h_{\eta_j}$ contains at least $k$ points of $A_{i,\eta_j}$. Therefore, we still achieve a bisection of $A_i$ without the general position in our assumption.\par
Third, let some of the $A_i$ have an even cardinality. By deleting and adding one random point from each even-sized $A_i$, we can still check the bisection. 
\end{proof}

\newtheorem*{hstg}{Ham Sandwich Theorem (General Position Version)}

\begin{hstg}
    Let $A_1, A_2,..., A_d \\ \subset \mathbb{R}^d$ be disjoint finite point sets in general position. Then there exists a hyperplane h that bisects each $A_i$, such that there are exactly $\lfloor \frac{1}{2} |A_i| \rfloor$ points from $A_i$ in each of the open half-spaces defined by $h$, and at most one point of $A_i$ on the hyperplane $h$.
\end{hstg}

\begin{proof}[\rm\bf{Proof}] By \textbf{Ham Sandwich Theorem for point sets}, we choose a random hyperplane $h$ such that simultaneously bisects $A_1,A_2,...,A_d$. We now reassign our coordinate system to let $h$ be the horizontal hyperplane $x_d = 0$. We now define $$ B:= h\cap (A_1 \cup ... \cup A_d).$$ We claim that $B$ consists of at most $d$ affinely independent points. By assumption, $A_i$ is in general position, so there are at most d points on $B$. Suppose there exists $x$ points that are affinely dependent, then the hyperplane will require at least $d-x+1$ points to generate, which is a violation of the general position assumption. 

We now add $d-|B|$ points to $B$ to get a $C \subset h$ with d affinely independent points. Since the points of $C$ are affinely independent, then for each $a \in C$, we can determine another $a'$ such that $a'=a$, or $a' = a+ \epsilon e_d$, or $a' = a- \epsilon e_d$. We let $h' = h'(\epsilon)$ be the desired hyperplane.
\end{proof}

\section{On Multicolored Partitions and Necklaces}
We begin with an easy result to establish the relevance of the ham sandwich theorem in multicolored partition problems.

\newtheorem*{thm}{Theorem}

\begin{thm}
 Consider sets $A_1, A_2,...,A_d$, of $n$ points each, in general position in $\mathds{R}^d$; imagine that the points of $A_1$ are red, the points of $A_2$ blue, etc. (each $A_i$ has its own color). Then the points of the union $A_1\cup ... \cup A_d$ can be partitioned into “rainbow” d-tuples (each d-tuple contains one point of each color) with disjoint convex hulls.
\end{thm}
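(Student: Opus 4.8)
The plan is to prove the statement by strong induction on $n$, using the general position version of the Ham Sandwich Theorem as a recursive slicing device. The base case $n = 1$ is immediate: each color class $A_i$ consists of a single point, so the union $A_1 \cup \cdots \cup A_d$ is itself a single rainbow $d$-tuple, and there is nothing to separate. For the inductive step, suppose the result holds for all smaller values of $n$, and apply the Ham Sandwich Theorem (general position version) to the $d$ color classes, each of size $n$, to obtain a hyperplane $h$ that simultaneously bisects every $A_i$.

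The crux is a parity analysis of how $h$ meets each color class. By the general position version, each open half-space contains exactly $\lfloor n/2 \rfloor$ points of $A_i$, and at most one point of $A_i$ lies on $h$. If $n$ is even, then $2\lfloor n/2 \rfloor = n$, so no point of any $A_i$ lies on $h$; the hyperplane splits the configuration into two sub-configurations, each consisting of $d$ color classes of size $n/2$, both in general position (a subset of points in general position remains so), and I would recurse on each open half-space. If $n$ is odd, then $2\lfloor n/2 \rfloor = n-1$, forcing exactly one point of each $A_i$ to lie on $h$; these $d$ points, one of each color, form a rainbow $d$-tuple whose convex hull lies inside $h$, while the remaining points split into two sub-configurations of $d$ color classes of size $(n-1)/2$ each, on which I would recurse.

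The disjointness of the convex hulls then follows from the separation provided by $h$ at each stage of the recursion. Every tuple produced inside one open half-space has all its vertices strictly on one side of $h$, so its convex hull lies strictly within that open half-space; tuples from opposite open half-spaces are therefore separated by $h$, and in the odd case the single tuple lying on $h$ is separated from both recursive families, since its convex hull lies in $h$ while the others lie in the open half-spaces. Since within each half-space the inductive hypothesis already guarantees pairwise disjoint convex hulls, all of the tuples produced across the whole recursion have pairwise disjoint convex hulls.

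I expect the main obstacle to be the careful bookkeeping that makes the recursion legitimate, rather than any single hard estimate: I must verify that each restricted sub-configuration still satisfies the general position hypothesis required to reapply the Ham Sandwich Theorem, and I must track the parity precisely so that the odd case really does deposit exactly one point of each color on $h$. It is this simultaneous \emph{one per color} conclusion, forced by the floor counts holding in every color at once, that yields a genuine rainbow tuple rather than a partial one; once these points are pinned down, the geometric separation argument is routine.
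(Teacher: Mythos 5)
Your proposal is correct and follows essentially the same route as the paper: recursive application of the general position version of the Ham Sandwich Theorem, with the parity analysis (even case: no points on $h$, recurse; odd case: exactly one point of each color forced onto $h$, forming a rainbow $d$-tuple) and disjointness of convex hulls guaranteed by the separating hyperplanes. In fact your write-up is somewhat more careful than the paper's, since you explicitly verify the base case, the inheritance of general position by subsets, and the separation argument.
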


For a visual understanding of the theorem, consider the below illustration: 
\includegraphics[width=5cm, height=4cm]{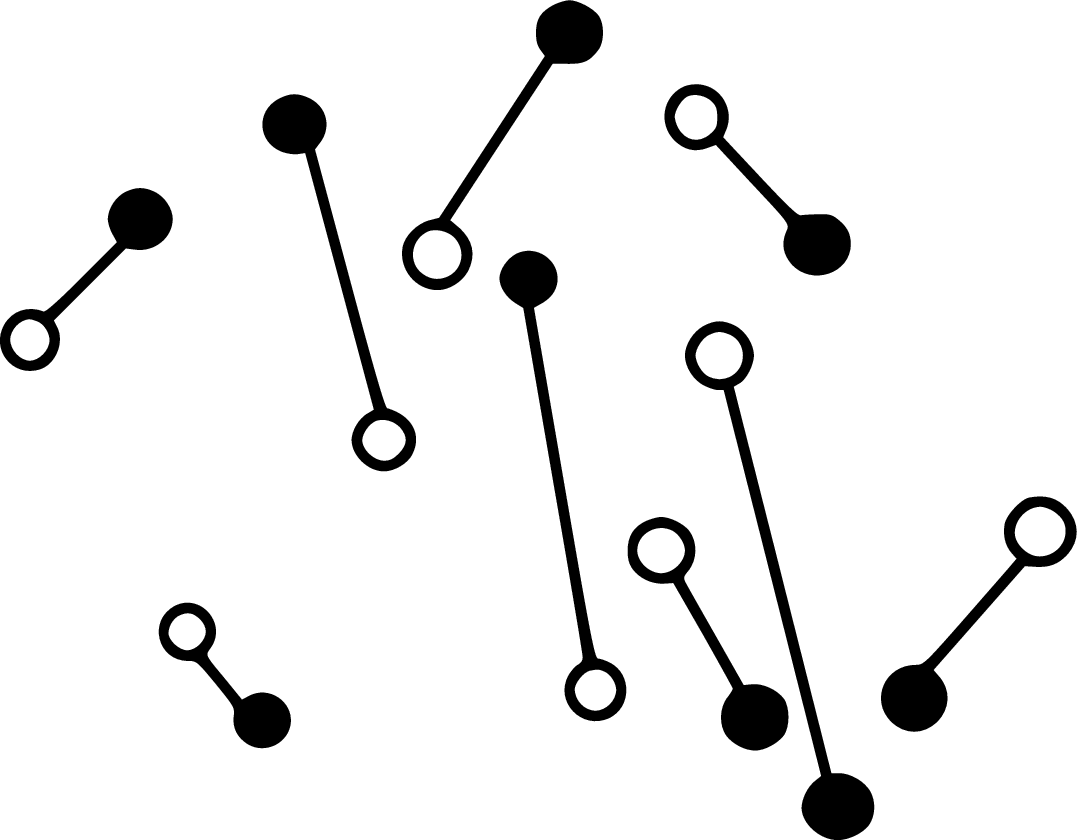}

\begin{proof}[\rm\bf{Proof}]:
First, let us consider the case where $n$ is an even number. By the general position version of the ham sandwich theorem, there exists a hyperplane which bisects all the $A_i$ with no points lying on the hyperplane. For $n$ odd, there exists a hyperplane that simultaneously bisects all $d$ point sets such that exactly one point from each set lies on the plane by the ham sandwich theorem for point sets in general position. We will let the convex hull connecting the points on this plane be the first of the rainbow $d$-tuples. We then recursively split the points remaining on both half-planes formed on either side of the initial hyperplane, according to the following algorithm:

- If the number of points in each half-plane, divide these points evenly in half with a hyperplane by the ham sandwich theorem.

- If the number of points in each half-plane is odd, apply the previously mentioned version of the ham sandwich theorem, such that one point of each $A_i$ remains on the plane. This will form another rainbow $d$-tuple.

Eventually, there will be exactly $d$ points of different colors lying on each side of the hyperplane, which we can place onto a convex hull. Thus, every point in the union of point sets now lies on a disjoint rainbow convex hull.
\end{proof}

We now proceed to tackling the necklace-splitting problem. 

\newtheorem*{problem}{The Necklace-Splitting Problem}

\begin{problem}
     Infamous jewel thieves G. Huang and H. Yan steal an invaluable necklace. The necklace is so valuable not only because of the precious gems it contains, but because they are embedded in solid gold. Huang and Yan want to split the jewels evenly between themselves while wasting as little gold as possible. In how few cuts is this possible?
\end{problem}

Luckily, Huang and Yan are accomplished mathematicians and are familiar with the following theorem:

\newtheorem*{necklace}{Necklace Theorem}

\begin{necklace}
     Every (open) necklace with $d$ kinds of stones can be divided between two thieves using no more than $d$ cuts.
\end{necklace}

Before we begin our proof, we require the following definition and lemma.

\begin{definition}
     A \textit{moment curve} ($\gamma$) is the curve in $\mathds{R}^d$ given by the parametric equation $(t, t^2, t^3, ..., t^d)$
\end{definition}

\newtheorem*{lemma}{Lemma}

\begin{lemma}
     No hyperplane intersects the moment curve $\gamma$ in $\mathds{R}^d$ in more than $d$ points
\end{lemma}

\begin{proof}[\rm\bf{Proof of Lemma}]
A hyperplane $h$ has an equation $a_1x_1 + a_2x_2 + ··· + a_dx_d = b$ with $(a_1, a_2, ..., a_d)$. To find all intersections between $h$ and $\gamma$, we substitute $\gamma$ in for $h$ and receive $a_1t + a_2t^2 + ... + a_dt^d = b$. Therefore, the values of $h$ at which $\gamma$ intersects are the roots of the equation $0 = a_1t + a_2t^2 + ... + a_dt^d$, which has no more than $d$ solutions.
\end{proof}

We may now proceed to proving the Necklace Theorem.

\begin{proof}[\rm\bf{Proof of Theorem}]
We place our necklace in $\mathds{R}^d$ along the moment curve $\gamma$. We define a system of sets $A_i$, where the points of $A_i$ are stones of the $i$-th kind. By the discrete general position version of the ham sandwich theorem, there exists a hyperplane $h$ that simultaneously bisects each $A_i$. By Lemma 0, $h$ cuts $\gamma$ (and thus the necklace) at no more than $d$ points. Assuming all sets $A_i$ have the same size, no stones lie on $h$, and thus we have an even division.
\end{proof}

There exists a second topologically-inspired proof for the Necklace Theorem. Before tackling this proof, we require the use of the following lemma, a continuous version of the necklace theorem:

\newtheorem*{hobby}{Lemma (Hobby-Rice Theorem)}

\begin{hobby}
    Let $\mu_1$, $\mu_2$, ..., $\mu_d$ be continuous probability measures on $[0, 1]$. Then there exists a partition of $[0, 1]$ into $d + 1$ intervals $I_0, I_1, ..., I_d$ (using $d$ cut points) and signs $\epsilon_1, \epsilon_2, ..., \epsilon_d \in \{ -1, +1 \}$ with $\sum_{j=0}^d \epsilon_j \cdot \mu_i(I_j) = 0$ for $i = 1,2, ..., d$.
\end{hobby}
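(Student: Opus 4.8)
The plan is to deduce this continuous splitting result directly from the Borsuk--Ulam Theorem by encoding a signed partition as a single point of the sphere $S^d$. The key device is to parametrize the $d$ cut points using a point $x = (x_1, \ldots, x_{d+1}) \in S^d \subset \mathbb{R}^{d+1}$. Since $\sum_{k=1}^{d+1} x_k^2 = 1$, the partial sums $z_j := \sum_{k=1}^{j} x_k^2$ satisfy $0 = z_0 \le z_1 \le \cdots \le z_{d+1} = 1$, so they cut $[0,1]$ into the $d+1$ intervals $I_j := [z_{j-1}, z_j]$. Crucially, this frees up the sign of each coordinate to carry independent information: I would assign the $j$-th interval the sign $\epsilon_j := \mathrm{sgn}(x_j)$, so that one point of $S^d$ simultaneously specifies both the cut locations and the signs required by the statement.

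Next I would define a map $f = (f_1, \ldots, f_d) \colon S^d \to \mathbb{R}^d$ by
$$f_i(x) := \sum_{j=1}^{d+1} \mathrm{sgn}(x_j)\,\mu_i(I_j), \qquad i = 1, \ldots, d.$$
The two properties I need are that $f$ is continuous and odd. Oddness is immediate: replacing $x$ by $-x$ leaves each $x_k^2$ (hence every endpoint $z_j$ and every interval $I_j$) unchanged while flipping every $\mathrm{sgn}(x_j)$, so $f(-x) = -f(x)$. Granting continuity, the Borsuk--Ulam Theorem furnishes a point $x \in S^d$ with $f(x) = f(-x)$; combined with oddness this gives $f(x) = -f(x)$, that is $f(x) = 0$, which is precisely the desired identity $\sum_{j} \epsilon_j \mu_i(I_j) = 0$ for all $i$, with $\epsilon_j = \mathrm{sgn}(x_j)$ and the intervals $I_j$ as above.

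The main obstacle is establishing continuity of $f$, since $\mathrm{sgn}$ is discontinuous at $0$. This is exactly where the hypothesis that the $\mu_i$ are continuous (non-atomic) measures enters. When a coordinate $x_j$ passes through $0$, one has $z_j = z_{j-1}$, so the corresponding interval $I_j$ degenerates to a single point and non-atomicity forces $\mu_i(I_j) = 0$. Thus each discontinuity of $\mathrm{sgn}(x_j)$ is always multiplied by a measure that vanishes in the limit, so every summand $\mathrm{sgn}(x_j)\,\mu_i(I_j)$ varies continuously in $x$; I would make this rigorous by noting that $x' \to x$ forces the endpoints $z_j(x') \to z_j(x)$ and then invoking non-atomicity to control the $\mu_i$-measure of the shrinking symmetric differences of the intervals (the shared endpoints of adjacent intervals likewise contribute nothing, again by non-atomicity). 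Once continuity is secured, the Borsuk--Ulam step closes the argument immediately.
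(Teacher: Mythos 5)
Your proposal is correct and follows essentially the same route as the paper: parametrize the cuts by squared coordinates of a point on $S^d$, use coordinate signs for the $\epsilon_j$, and apply the Borsuk--Ulam Theorem to the resulting odd map. In fact your treatment is slightly more careful than the paper's, both in indexing the sphere as $(x_1,\ldots,x_{d+1}) \in S^d \subset \mathbb{R}^{d+1}$ and in justifying continuity via non-atomicity at degenerating intervals, which the paper merely asserts.
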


\begin{proof}[\rm\bf{Proof}]
For every point $\textbf{x} = (x_1, x_2, ..., x_d)$ of the sphere $S^d$, we associate a division of the interval $[0,1]$ into $d + 1$ parts, with lengths $x_1^2, x_2^2, ..., x_d^2$. For example, the point $(1, 0, 0)$ on the sphere in $\mathds{R}^d$ is associated with the unbroken interval $[0, 1]$. Specifically, each cut on the interval is made at the point $z_i$, where $z_i = x_1^2 + ... + x_i^2$ for any $i$ between $0$ and $d+1$, inclusive. The sign $\epsilon_j$ for the interval $I_j = [z_{j-1}, z_j]$ is chosen as the sign of the coordinate $x_j$. From this, we may define a function $g: S^d \rightarrow \mathds{R}^d$, where $g(x) = \sum_{j=1}^{d+1}$ sign$(x_j) \cdot \mu_i([z_{j-1}, z_j])$. We can interpret $g$ as allocating the measure of the intervals associated with positive sign to the first thief, and the measure of the intervals associated with the negative sign to the second thief. $g$ is continuous, since each $\mu_i$ is a continuous probability measure. $g$ is also antipodal, as multiplying $x$ by $-1$ reverses the sign of the coordinate $x_j$ while the probability measure $\mu_i([z_{j-1}, z_j])$ remains the same, such that $g(-x) = -g(x)$. Therefore, by the Borsuk-Ulam theorem, there exists an $\textbf{x}$ such that $g(\textbf{x}) = 0$. This implies that the amount of measure allocated to the first thief is exactly the same as the amount allocated to the second thief. Hence, there exists a just division under this $\textbf{x}$.
\end{proof}

We may now proceed to proving the Necklace Theorem (again).

\begin{proof}[\rm\bf{Proof}]
\includegraphics[width=12cm,height=3cm]{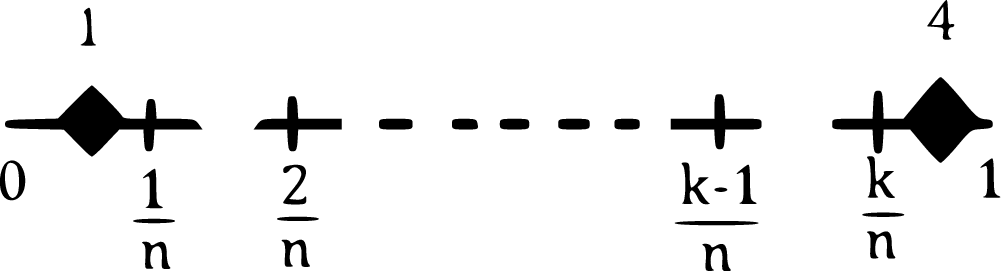}\par
Firstly, we place the necklace on the interval $[0, 1]$, such that the $k$-th stone corresponds to the interval $[\frac{k-1}{n}, \frac{k}{n})$, as pictured below.

Now we define the family of functions $f_i$ such that $f_i: [0,1] \rightarrow [0,1]$,

    \begin{equation*}
    f_i(x)= \left \{
        \begin{array}{ll}
            1 & \text{ if } k \text{-th stone of the necklace is of } i \text{-th kind}\\
            0 & \text{ otherwise.}
        \end{array}
    \right.
    \end{equation*}




With this knowledge, we now can define a family of measures $\mu_i$ describing the fraction of stones of the $i$-th kind that is on any part A of the necklace:

$\mu_i(A) = \frac{n}{t_i} \cdot \int_A f_i(x) \cdot dx$

where $t_i$ represents the total number of stones of type $i$. To see why this formula makes sense, evaluate the integral $\int_A f_i(x) \cdot dx$ to get $1 |_A$. Remember that we placed each stone $k$ within the interval $[\frac{k-1}{n}, \frac{k}{n})$, which has length $\frac{1}{n}$. Therefore, $1 |_A$ evaluates to $\frac{1}{n} \cdot t_i$, the fraction of stones of type $i$ that are within A. Therefore, we must multiply the entire integral by $\frac{n}{t_i}$ so that it integrates to unity, which is the measure of the complete interval $[0, 1]$. We now apply the Hobby-Rice Theorem, in the case where $\epsilon_j = (-1)^j$ so that $\sum_{j=0}^d (-1)^j \cdot \mu_i(I_j) = 0$. Similar to the above proof, we can assign the positive intervals to the first thief, and the negative intervals to the second thief. While this division is fair, some stones may be split in two between thieves, which violates the spirit of the problem. To solve this, realize that if a cut splits a stone in two, then there must exist another cut subdividing a stone of type $i$ to balance the measure. In this case, we can move both cuts away from the stones without altering the balance.
\end{proof}

\section{Kneser's Conjecture and the Lovasz-Kneser Theorem}
After the application of the Borsuk-Ulam Theorem in the necklace problem we proceed to the field of combinatorics. In this section, the Borsuk-Ulam Theorem will only serve as inspiration, and the proof will be primarily from the fields of combinatorics and graph theory. Therefore, we need some basic combinatorial background before starting the proof.

Consider a set $[n]$ with $n$ elements, and another number $k$ such that $n \geq 2k-1$. Denote the set $[n]\choose k$ as the subset of $[n]$'s power set where every element in $[n]\choose k$ is a subset of $[n]$ with exactly $k$ elements. Here we have a mapping $f: {[n]\choose k}\rightarrow M$, where $M$ is a set of distinct $i$ colors ($M$: \{$m_1$,...,$m_i$\}). The mapping $f$ satisfies this condition:

\begin{quote}
    For any $K_1, K_2 \in {[n]\choose k}$, if $K_1 \bigcap K_2 = \varnothing$, then $f(K_1) \neq f(K_2)$
\end{quote}

Of course, there are more than one mapping like this f coloring $k$-element subsets of $[n]$ with the principle above. One of the possible ways for such coloring is to color each $k$-element subset with a distinct color, and this way of coloring needs $n\choose k$ (number of $k$-element subsets in $[n]$) different colors. However, the Lovasz-Kneser Theorem is not interested in having as many colors as possible: it focuses on the minimum number of distinct colors we need in order to color $k$-element subsets of $[n]$ satisfying the condition. Later, we will refer to any coloring of sets in $[n]\choose k$ satisfying the condition as a 'proper coloring'.

\newtheorem*{kneser}{Kneser Conjecture}

\begin{kneser}
     The minimum number of color needed for a proper coloring of elements in $[n]\choose k$ is $n-2k+2$.
\end{kneser}

To interpret this theorem we introduce some background from graph theory. Consider a graph in which each element in $[n]\choose k$ is a vertex. For any two dots in the graph representing two elements $K_1$ $K_2$ in $[n]\choose k$, if the two subsets don't share any element of $[n]$, then there is an edge connecting the two dots representing $K_1$ $K_2$. Here, a proper coloring refers to a pattern of coloring each dot in the graph such that, any two vertices connected in the graph are colored by two distinct colors. 

For a set X and its set system $\mathcal{F}$, the graph $G$ that represents each element of $\mathcal{F}$ (each subset of X in $\mathcal{F}$) as a vertex is called the Kneser Graph of $\mathcal{F}$. The chromatic number of this graph, denoted by $\chi (G)$, is the minimum number of $k$ where $k$ distinct colors are able to color the graph $G$ properly (two vertices connected by an edge are colored by two different colors). The Kneser graph of set system $[n]\choose k$ representing each element in $[n]\choose k$ as a vertex is denoted as $KG_{n,k}$. Thus, the theorem claims that, the chromatic number of $KG_{n,k}$, $\chi (KG_{n,k})$, is equal to $n-2k+2$.

For example, when we have a set $[5]$ with five elements and the set system consists of all subsets of $[5]$ with only two elements, the Kneser Graph looks like this, where every dot represents a subset of $[5]$ with exactly two elements.
\includegraphics[width=15cm, height=9cm]{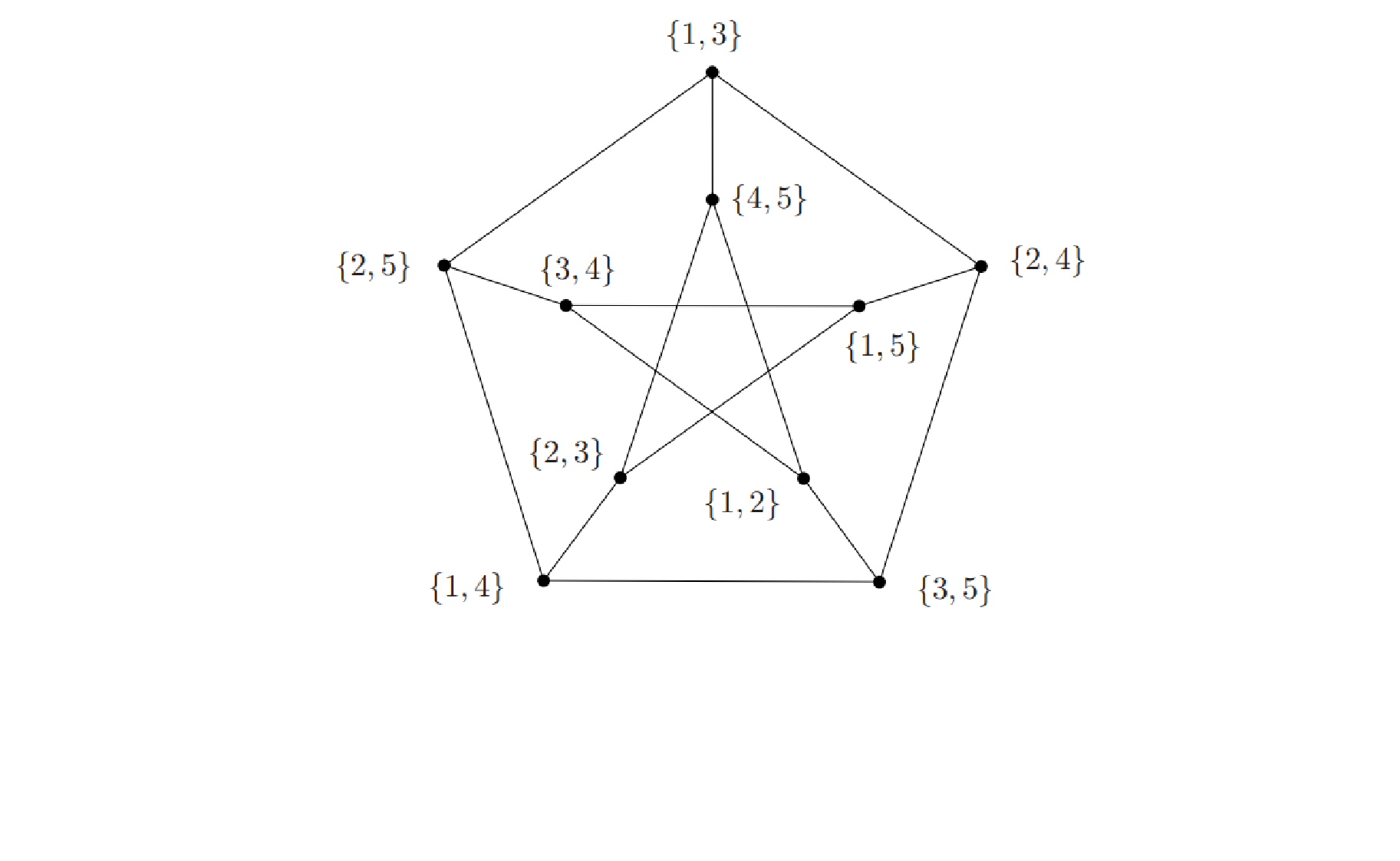}
    \begin{center}
        Credit: Matoušek
    \end{center}

\newtheorem*{lkneser}{The Lovasz-Kneser Theorem (reformulated Kneser Conjecture)}

\begin{lkneser}
     For natural number $k$ and $n\geq2k-1$,  $\chi (KG_{n,k})$, the chromatic number of $KG_{n,k}$ is equal to $n-2k+2$.
\end{lkneser}

\begin{proof}[\rm\bf{Proof}]
To prove this theorem we need to show that $\chi (KG_{n,k})$ is both smaller or equal to $n-2k+2$ and greater or equal to $n-2k+2$. Then we can conclude  $\chi (KG_{n,k})$ must be equal to $n-2k+2$.

Firstly we show $\chi (KG_{n,k})$ is smaller or equal to $n-2k+2$. Here we view the set $[n]$ identically to the set $\{1,...,n\}$ where every element in $[n]$ is assigned to a number. Consider the $n-2k+2$ distinct color set \{$m_1$,...,$m_{n-2k+2}$\}, for each vertex in the Kneser Graph $KG_{n,k}$, color it with the $i^{th}$ color in \{$m_1$,...,$m_{n-2k+2}$\}, where $i$ = $\text{min}\{\text{min}(F),n-2k+2\}$. Here $min(F)$ denotes the smallest number among each number assigned to elements in set $F$. Now, each vertex in $KG_{n,k}$ is assigned to a color in \{$m_1$,...,$m_{n-2k+2}$\}. We need to verify that this coloring pattern is a proper coloring, which means, if two vertices in $KG_{n,k}$ are colored in the same color then they must share some elements in $[n]$. 

Here, if two vertices are colored by the same color labelled as number $j$, and $j\textless n-2k+2$, this means the $k$-element subsets of $[n]$ represented by the two vertices share the same element of $[n]$ such that the shared element is labelled as $j \in \{1,...,n\}$. If two vertices are color by the same color that is labelled as number $n-2k+2$, then each of the vertices represents a $k$-element subset of elements of $[n]$ labelled as numbers in \{$n-2k+2$,...,$n$\}. Notice that, the subset of elements of $[n]$ labelled as numbers in \{$n-2k+2$,...,n\} only has $2k-1$ distinct elements. Then for the two $k$-element subsets, they must be sharing at least one element of $[n]$ labelled as numbers in \{$n-2k+2$,...,n\}.

For this coloring pattern, any two vertices colored in the same color represents two $k$-elements subsets that are not disjoint. Thus, $n-2k+2$ is sufficient for a proper coloring of $KG_{n,k}$. By definition of $\chi (KG_{n,k})$, it must be smaller or equal to $n-2k+2$. 

The first half of the proof focuses exclusively on set theory, and the application of the Borsuk-Ulam Theorem will show up in the second half of the proof where we confirm $\chi (KG_{n,k}) \geq n-2k+2$.

To show that $\chi (KG_{n,k}) \geq n-2k+2$, we can use an alternative approach by showing that for any $i\leq n-2k+2$, $i$ distinct colors are insufficient for a proper coloring of the Kneser graph $KG_{n,k}$. More directly, we only need to show that $n-2k+1$ distinct colors can't form a proper coloring for $KG_{n,k}$ (If $n-2k+1$ distinct colors can only form a coloring pattern where some pairs of vertices joined by an edge is colored with the same color, then a color set with less distinct colors can only have even more joint pairs colored by the same color.).

Consider the color set \{$m_1$,...,$m_d$\}, where $d = n-2k+1$. We proceed to arguing by contradiction through assuming there is a proper coloring of $KG_{n,k}$ by $n-2k+1$ distinct colors. Take $X \subset S^{d}$ where $X$ is a $n$-point subset of $S^{d}$. One additional requirement for $X$ is that, for any hyperplane in $R^{d+1}$ passing through the origin, it can't contain more than d points of set $X$. This is a requirement based on the general position principle (For $R^{d+1}$, a set is in general position implies that each hyperplane can have at most $d$ points from the set, here the requirement changes $d$ to $d-1$). It is easy to assign each element in set $[n]$ to a point in $X$ and such assignment is a one-to-one correspondence. Thus, we no longer the vertex set of $KG_{n,k}$ as $[n]\choose k$, instead we regard it as $X\choose k$ where every vertex represents a subset of $X$ with $k$ points on $S^{d}$. (Every point of $X$ corresponds to an element in $[n]$.

Now, we define subsets $A_{1},...,A_{d} \subseteq S^{d}$ by this way: for $x \in S^{d}$, if the open hemisphere centered at $x$ contains a $k$-element subset of $X$ such that, the vertex in $KG_{n,k}$ representing this subset is colored as the $i$-th color in the color set \{$m_1$,...,$m_d$\}. In addition to $A_{1},...,A_{d} \subseteq S^{d}$, we define $A_{d+1}$ as $S^{d} \backslash A_{1} \bigcup ... \bigcup A_{d}$. It is evident that $A_{1},...,A_{d+1}$ forms a cover of $S^{d}$. By the Lyusternik–Shnirel’man theorem derived from the Borsuk-Ulam theorem, given a cover of $A_{1},...,A_{d+1}$ of $S^{d}$, there exists a set $A_{l}$ among the sets forming the cover such that $A_{l}$ contains a pair of antipodal points. Denote the pair of antipodal points as $y$, $-y$. 

Consider ${l} \in \{$1$,...,$d$\}$, then the open hemisphere centered at $y$ contains a $k$-element subset of $X$ such that the vertex in $KG_{n,k}$ representing this subset is colored in color $l$, and the open hemisphere centered at $-y$ contains another $k$-element subset such that the the vertex in $KG_{n,k}$ representing this subset is colored in the way. Notice that the two $k$-element subsets are in different open hemispheres, so they must be disjoint. Therefore such coloring must have violated the principle of proper coloring, and ${l}$ must not be in \{$1$,...,$d$\}.

The only possibility left is ${l} \in \{$d+1$\}$, which implies $A_{d+1}$ contains a pair of antipodal points $y$,$-y$. By the way we define $A_{1}$,...,$A_{d}$,the open hemisphere $H(y)$ centered at $y$ can't contain any $k$-element subset of $X$ (If it does then the vertex representing this subset will be colored with a color and $y$ will no longer be in $A_{d+1}$). Thus the open hemisphere centered at $y$ can have at most $k-1$ points from $X$, same for the open hemisphere $H(-y)$ centered at $-y$. Given set $X$ on $S^{d}$ has $n$ elements, there must be at least $n-(2k-2)$ points on $S^{d} \backslash (H(y) \bigcup H(-y))$. Notice that $S^{d} \backslash (H(y) \bigcup H(-y))$ is an equator of $S^{d}$ which lies on a hyperplane passing through the origin. By our setup, no hyperplane passing through the origin can't contain more than d points of $X$. However, here the hyperplane which $S^{d} \backslash (H(y) \bigcup H(-y))$ lies on contains all points of $X$ on the equator, and there are $n-2k+2 = d+1$ points on the equator (Recall $d = n-2k+1$). Thus, ${l}$ must not be equal to $d+1$ too.

\includegraphics[width=8cm, height=8cm]{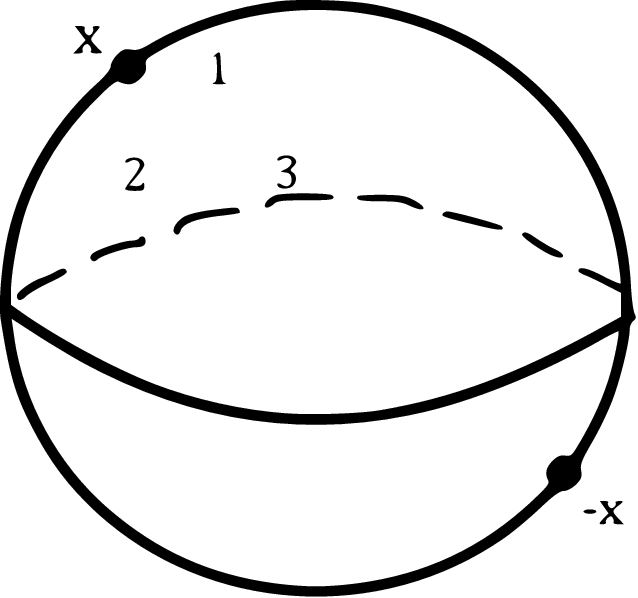}
    \begin{center}
        Here, $x$ is in the $A_{red}$ point set.
    \end{center}

By our assumption that there is a proper coloring of $KG_{n,k}$ by $n-2k+1$ distinct colors, there is no set containing a pair of antipodal points among $A_{1},...,A_{d+1}$ which form a cover of $S^{d}$. This contradicts the Borsuk-Ulam Theorem which is already accepted as true. Therefore the assumption must be false and $n-2k+1$ distinct colors are insufficient for a proper coloring of $KG_{n,k}$. So, at least $n-2k+2$ is needed for a proper coloring of $KG_{n,k}$. So we proved$\chi (KG_{n,k}) \geq n-2k+2$.

Given $\chi (KG_{n,k}) \geq n-2k+2$ and $\chi (KG_{n,k}) \leq n-2k+2$, we proved that $\chi (KG_{n,k}) = n-2k+2$, the Lovasz-Kneser Theorem is proved.
\end{proof}

\section{Dol'nikov's Theorem}
Dol'nikov's theorem is a generalization of the Lovász-Kneser theorem. The difference is that instead of giving an exact number for $\chi(KG_{n,k})$, Dol'nikov's theorem provides a lower bound for $\chi(KG(\mathcal{F}))$ where $\mathcal{F}$ is an arbitrary (finite) set system. However, to properly define this lower bound, we must first define hypergraph and what it means for a hypergraph to be $m$-colorable.\par
\begin{definition}
    Given any set $X$ and $\mathcal{F} \subseteq 2^X$ a system of subsets of $X$, we call $\mathcal{F}$ a set of hyperedges and the pair $(X, \mathcal{F})$ a hypergraph generated by $X$ and $ \mathcal{F}$.
\end{definition}  
\begin{definition}
    The hypergraph is $m$-colorable if there exists a coloring $c: X \rightarrow [m]$ such that no hyperedge is monochromatic under $c$, i.e. $|c(F)|>1$ for all $ F \in \mathcal{F}$.
\end{definition} Notice that here we have identified the set of $m$ colors with the set of integers $[m]=1,2,...,m$. Also, one should be careful to distinguish this coloring from the proper coloring of a Kneser Graph. \par
Now, given a hypergraph $(X,\mathcal{F})$, we define the $m$-colorability defect as follows:
\begin{equation*}
    cd_m(\mathcal{F})=\text{min}\{|Y|:Y \subseteq X \text{ and }(X\backslash Y,\{F \in \mathcal{F}:F \cap Y=\varnothing \}) \text{ is m-colorable}\}
\end{equation*}
where $|Y|$ denotes the number of elements in $Y$. \par
(Note that we can view the same collection of points as being either a Kneser graph, with the grouping of certain points being nodes of the Kneser graph, or as a hypergraph, with the nodes of the Kneser graph being hyperedges on the hypergraph, grouping together nodes).

In particular, we are concerned with $cd_2(\mathcal{F})$, which can be interpreted as the minimum number of points of $X$ being colored white such that when the rest of the points in $X$ are colored red or blue, no $F \in \mathcal{F}$ is completely red or completely blue. Based on these definitions, Dol'nikov's theorem can be formally stated as follows:

\newtheorem*{Dol}{Dol'nikov's Theorem}

\begin{Dol}
For any hypergraph $(X,\mathcal{F})$, we have \\
\centerline{$\chi(KG(\mathcal{F})) \geq cd_2(\mathcal{F})$}. 
\end{Dol}
Next, we supply 2 proofs of the theorem, both involving Borsuk-Ulam in some way.
\let\oldproofname=\proofname
\renewcommand{\proofname}{\rm\bf{\oldproofname}}
\begin{proof}
    This section is strikingly similar to the latter half of the proof of the Lovász-Kneser theorem. Let $d=\chi (KG(\mathcal{F}))$. We then identify the points of $X$ with a set of points in general position on $S^d$ and $\mathcal{F}$ with the same set system based on the point set on $S^d$. We construct the sets $A_1, ..., A_d$ by the following process: $\forall x \in S^d$, assign $x\in A_i$ if the open hemisphere $H(x)$ contains a set $F \in \mathcal{F}$ colored by color $i$. In other words, $A_i=\{x \in S^d: H(x) \text{ contains an }F\in \mathcal{F}\text{ of color } i\}$. Furthermore, we define $A_{d+1}=S^d\backslash\{A_1 \cup ... \cup A_d\}$.\par
    Notice that $S^d=A_1 \cup A_2 \cup ... \cup A_{d+1}$. Then by the Lyusternik-Shnirel'man theorem, we have $x, -x \in A_i$ for some $A_i$. \par
    Suppose $i<d+1$, then since $x\in A_i$, $H(x)$ contains some $F \in \mathcal{F}$ with color $i$ and $H(-x)$ also contains some $F' \in \mathcal{F}$ with the same color. Since $H(x)$ and $H(-x)$ are disjoint, $F \subset H(x)$ and $F' \subset H(-x)$ must also be disjoint (implying they are connected in the Kneser Graph). Now we have an edge between two nodes with the same color $i$, meaning the coloring of the Kneser Graph is not proper. However, we assumed proper coloring by letting $d=\chi (KG(\mathcal{F}))$. Contradiction.\par
    Hence $i=d+1$. In this case, by definition of $A_{d+1}$, neither $H(x)$ nor $H(-x)$ can contain any $F \in \mathcal{F}$, i.e. any $F \in \mathcal{F}$ either completely consists of points from both hemispheres or it contains points on the equator. If we color $Y$=the points of $X$ that are on the equator (which is at most $d$ points by the general position assumption) white, those in $H(x)$ red, and those in $H(-x)$ blue, then any $F\in \mathcal{F}$ will have at least 2 colors. Equivalently speaking, $\{F \in \mathcal{F}:F\cap Y=\varnothing\}$ will be exactly those $F \in \mathcal{F}$ consisting of points from both hemispheres, which means the hypergraph $(X\backslash Y, \{F \in \mathcal{F}:F\cap Y=\varnothing\})$ has proper 2 coloring.\par
    So we have $cd_2(\mathcal{F})\leq d$.
\end{proof}
The proof above is not Dol'nikov's original proof, but it still invokes the Borsuk-Ulam by its application of Lyusternik-Shnirel'man. Dol'nikov's original proof uses another version of Borsuk-Ulam theorem, as we shall see in the upcoming proposition that is used in the original proof.

\newtheorem*{prop}{Proposition}

\begin{prop}
    Let $C_1, C_2, ..., C_d$ be systems of nonempty compact convex sets in $\mathbb{R}$ (every $C_i$ is a family of compact convex sets) such that evey $C_i$ is intersecting, i.e. for any $A, B \in C_i$, $A \cap B \neq \varnothing$. Then there is a hyperplane intersecting all sets of the set system $\bigcup\limits_{i=1} ^d C_i$.
\end{prop}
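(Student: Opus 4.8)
The plan is to reduce the problem to a one-dimensional statement for each direction and then use Borsuk-Ulam to synchronize the $d$ families (I read the ambient space as $\mathbb{R}^d$, since hyperplanes and $d$ families only fit together there). Fix a unit direction $a \in S^{d-1} \subset \mathbb{R}^d$ and project everything onto the line $\mathbb{R}a$: each compact convex $A$ maps to the closed interval $\pi_a(A) = [\min_{x\in A}\langle a,x\rangle,\ \max_{x\in A}\langle a,x\rangle]$, and a hyperplane $\{x:\langle a,x\rangle = c\}$ meets $A$ exactly when $c \in \pi_a(A)$ (using that $A$ is convex and compact, so $\langle a,\cdot\rangle$ attains every value in this interval).

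First I would observe that within a single intersecting family $C_i$ the projected intervals are pairwise intersecting: if $p \in A\cap B$ then $\langle a,p\rangle \in \pi_a(A)\cap\pi_a(B)$. By the one-dimensional Helly theorem (finitely many pairwise intersecting intervals on a line share a common point, since $\max$ of left endpoints $\le \min$ of right endpoints), the set $I_i(a) := \bigcap_{A\in C_i}\pi_a(A)$ is a nonempty closed interval $[l_i(a),r_i(a)]$. Any $c \in I_i(a)$ gives a hyperplane with normal $a$ meeting every member of $C_i$, so a single transversal hyperplane with normal $a$ exists iff $\bigcap_{i=1}^d I_i(a) \neq \varnothing$. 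The problem is now purely: find a direction $a$ making these $d$ intervals overlap.

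To find such a direction I would introduce the midpoint $m_i(a) = \tfrac12\bigl(l_i(a)+r_i(a)\bigr)$ and the map $f: S^{d-1}\to\mathbb{R}^{d-1}$, $f(a)=\bigl(m_1(a)-m_d(a),\dots,m_{d-1}(a)-m_d(a)\bigr)$. The key antipodal fact is that reversing the direction reflects every projection, $\pi_{-a}(A)=-\pi_a(A)$, whence $I_i(-a)=-I_i(a)$ and $m_i(-a)=-m_i(a)$; thus $f$ is antipodal, $f(-a)=-f(a)$. Applying the Borsuk-Ulam theorem to $f$ (the case $n=d-1$) yields $a^\ast$ with $f(a^\ast)=f(-a^\ast)=-f(a^\ast)$, so $f(a^\ast)=0$ and all midpoints coincide, $m_1(a^\ast)=\dots=m_d(a^\ast)=:c^\ast$. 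Since each interval contains its own midpoint, $c^\ast\in\bigcap_i I_i(a^\ast)$, and the hyperplane $\{x:\langle a^\ast,x\rangle=c^\ast\}$ meets every set of $\bigcup_{i=1}^d C_i$.

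The main obstacle to formalize is the continuity of $f$, which the Borsuk-Ulam step requires: I must check that $l_i(a)=\max_{A\in C_i}\min_{x\in A}\langle a,x\rangle$ and $r_i(a)=\min_{A\in C_i}\max_{x\in A}\langle a,x\rangle$ vary continuously with $a$. For finite families (the case relevant to Dol'nikov's theorem) this is immediate, as each is a finite $\max$/$\min$ of continuous support functions; the only care needed is to confirm the families are finite, or otherwise compact enough for the extrema to be attained. The genuinely creative step is the midpoint construction, which is precisely the device that converts the geometric demand ``the $d$ intervals overlap'' into the hypothesis ``an odd map has a zero,'' exactly the form on which Borsuk-Ulam acts.
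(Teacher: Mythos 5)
Your proposal is correct and follows essentially the same route as the paper's proof: project each family onto the line in direction $a$, invoke one-dimensional Helly to get nonempty intervals $I_i(a)$, take midpoints, form the antipodal difference map into $\mathbb{R}^{d-1}$, and apply Borsuk--Ulam to make all midpoints coincide, yielding the transversal hyperplane. The only differences are notational (you work with signed scalar coordinates where the paper uses vector midpoints and inner products $\langle m_i(v),v\rangle$), and your explicit verification of continuity of $l_i, r_i$ via support functions is a point the paper's proof asserts without justification.
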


\begin{proof}[\rm\bf{Proof of Proposition}]
    For a unit vector $v \in S^{d-1}$ starting at the origin, let $\ell_v$ denote the line passing the origin and oriented in the direction of $v$. For each $C_i$, consider the projections of the sets of $C_i$ onto $\ell_v$ and denote the intersection of these projections $I_i(v)=\bigcap\limits_{C \in C_i}\text{proj}_{\ell_v}(C)$. Such intersection must not be empty because we can apply Helly's theorem given that the projected intervals are 1 dimensional and any 2 intervals intersect. Furthermore, denote the midpoint of $I_i(v)$ by $m_i(v)$, which is a vector parallel to $\ell_v$ and $v$.
    \includegraphics[scale=1]{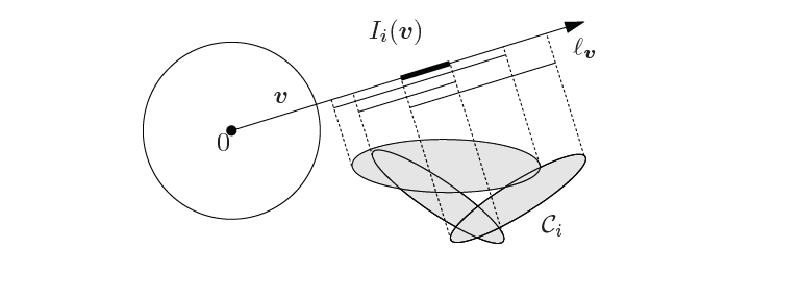}
    \begin{center}
        Credit: Matoušek
    \end{center}
    \par
    The goal is to find some $v$ such that all the midpoints coincide. We will prove that such a vector exists by defining some functions and applying the Borsuk-Ulam theorem.\par
    Define a continuous antipodal map $f:S^{d-1}\rightarrow \mathbb{R}^d$ by setting each component $f_i(v)=\langle m_i(v),v \rangle$. We see that $f$ is antipodal by noticing that $\ell_v=\ell_{(-v)}$. Then we have $\text{proj}_{\ell_v}(C)=\text{proj}_{\ell_{(-v)}}(C)$ for any $C \in C_i$ and $I_i(-v)=I_i(v)$. Consequently, $m_i(v)=m_i(-v)$ and we have $\langle m_i(v),v \rangle=-\langle m_i(v),-v \rangle=-\langle m_i(-v),-v \rangle$.\par
    We then define another continuous antipodal map $g: S^{d-1} \rightarrow \mathbb{R}^{d-1}$ by setting $g_i=f_i-f_d$ for $i=1,2,...,d-1$. The map is antipodal because $g_i(v)=f_i(v)-f_d(v)=-f_i(-v)+f_d(-v)=-(f_i(-v)-f_d(-v))=-g_i(-v)$. By Borsuk-Ulam, there exists $v \in S^{d-1}$ such that $g(v)=0 \Rightarrow f_i(v)-f_d(v)=0 $ for $ i=1,2,...,d$.\par
    By construction of $f$, we now have $\langle m_i(v),v \rangle=\langle m_d(v),v \rangle$ for all $i=1,2,...,d$ for some $v$. Then $0=\langle m_i(v),v \rangle - \langle m_d(v),v \rangle=\langle m_i(v)-m_d(v),v \rangle$, where $m_i(v)-m_d(v)$ is parallel to $v$. Hence it must be the case that $m_i(v)-m_d(v)=0$. Therefore we have $m_i(v)=m_d(v)$ for all $i=1,2,...,d$.\par
    Since all the midpoints coincide, we can consider the hyperplane perpendicular to $\ell_v$ at that point, which will intersect all the sets in $\bigcup\limits_{i=1} ^d C_i$.
\end{proof}

Given the above proposition, we can finally proceed to the second proof of Dol'nikov's theorem.

\newcommand{\F}{\mathcal{F}}

\begin{proof}[\rm\bf{Another Proof of Dol'nikov's Theorem}]
    Let there be a proper $d$-coloring of $KG(\F)$. Notice that if we have $d=\chi(KG(\F))$ then we have a proper $d$-coloring of the Kneser Graph (hence it is permissible to assume $d=\chi(KG(\F))$). Then $\F$ can be partitioned into $d$ set systems by color, i.e. $\F_1, \F_2, ..., \F_d$ where $\F_i=\{F \in \F: c(F)=i\}$ where $c$ is the coloring map on $KG(\F)$. Then any 2 sets from $\F_i$ must intersect because they share a color and the coloring is proper. \\
    \includegraphics[width=6cm, height=4cm]{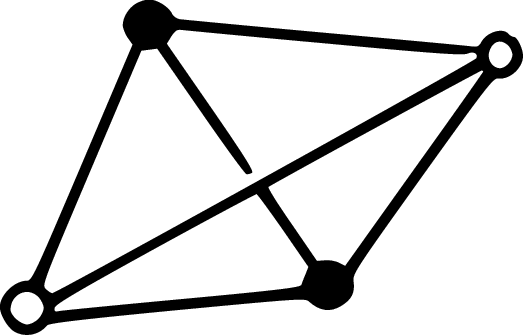}
    \begin{center}
        An illustration of the convex sets of the Kneser graph $4\choose2$ in $\mathds{R}^2$.
    \end{center} \par
    Next, we place the points of $X$ into $\mathbb{R}^d$ such that they are in general position, meaning no more than $d$ points can lie on the same hyperplane. Furthermore, define 
    \begin{equation*}
        C_i=\{\text{conv}(F): F \in \F_i \}
    \end{equation*}
    for $i=1,2,...,d$. Then $C_1, C_2, ..., C_d$ are systems of compact convex sets, and by the previous proposition we obtain a hyperplane $h$ intersecting all sets of $\bigcup\limits_{i=1}^d C_i=\{F \in \F\}$.\par
    
    We notice that since $h$ intersects every conv$(F)$, every $F$ must either completely consist of points from both of the open hemispheres divided by $h$, or it must contain a point lying on $h$ (otherwise, if $F$ consists of points from only one of the hemispheres and doesn't contain any point on $h$ then conv$(F)$ will not be intersected by $h$). Therefore if we color the points of $X$ on $h$ white, the points in one of the hemispheres red, and the points in the other hemisphere blue, then we have a proper 2 coloring of the hypergraph $(X\backslash Y, \{F \in \F: F\cap Y=\varnothing\})$ where $Y$ is the set of points of $X$ intersecting $h$ (i.e. the set of white points). There can be at most $d$ white points by the general position assumption. Hence $cd_2(\F)\leq d$.
 \end{proof}
 
\section{Conclusion}
As we can see, there exist some broad "recipe" as to how to apply the Borsuk-Ulam theorem to problems in discrete mathematics. Generally, one takes a discrete point set in general position, define some kind of clever set-covering on it, then use this cover to apply some version of Borsuk-Ulam or its derivatives (i.e, the ham sandwich theorem). The field of topological methods in combinatorics has proved rich in new ideas, and further research is needed to see how it can be applied to solve intractable problems in combinatorics and graph theory.


\section{References}
Matoušek, Jiří. Using the Borsuk–Ulam Theorem: Lectures on Topological Methods in Combinatorics and Geometry. Springer-Verlag, 2003.
\end{document}